\documentclass[12pt,draftcls,onecolumn]{IEEEtran}

\usepackage{graphicx}
\usepackage{amsmath}
\usepackage{amsthm}
\usepackage{amssymb}
\usepackage{tikz}
\usepackage{pgfplots}
\usetikzlibrary{positioning,calc}
\usepackage{amsmath}
\usepackage{url}
\usepackage{hyperref}

\usepackage{mleftright}
\usepackage{todonotes}

\newcommand{\tr}{\operatorname{tr}}
\newcommand{\diag}{\operatorname{diag}}

\newcommand{\be}{\begin{equation}}
\newcommand{\ee}{\end{equation}}
\newcommand{\R}{\mathbb R}

\newcommand{\abs}{\operatorname{abs}}

\newtheorem{Theorem}{Theorem}

\newtheorem{Proposition}{Proposition}
\newtheorem{Lemma}{Lemma}
\newtheorem{Corollary}{Corollary} 

\newtheorem{Remark} {Remark}
\newtheorem{Example} {Example}

\title{Orthant-Monotonic Norms and Additive D-Stability\thanks{The work of RO was supported in part by the Air Force Office of Scientific Research, under award numbers FA9550-23-1-0175 and FA9550-25-1-0223, and in part by the Viterbi Fellowship, Technion-IIT. The research of MM is partially supported by a research grant from the Israeli Science  Foundation~(ISF).}     }
\date{May 2026}

\author{Ron Ofir  
  and Michael Margaliot
\thanks{RO  is    with the Department of Electrical Engineering,
Yale University, CT, USA.
MM is with the School of  ECE,
		and the Sagol School of Neuroscience, 
		Tel-Aviv University, Tel-Aviv~69978, Israel.
        Correspondence:  \texttt{ron.ofir@yale.edu}
        }%
}

\begin{document}

\maketitle

\begin{abstract}
Matrix measures induced by vector norms are widely used in contraction theory
of nonlinear dynamical systems. A natural and important robustness question is whether negativity of a matrix measure is preserved
under arbitrary   nonnegative diagonal damping. Matrix measures with this property have been called
admissible. We show that an induced matrix measure is admissible if and only if the underlying norm is
orthant-monotonic. Equivalently, these are precisely the induced matrix measures satisfying~$\mu(D) = \max_i
\{d_{ii} \}$ for every nonnegative diagonal matrix $D$. We further show that this class is maximal for uniform
preservation of contraction under nonnegative diagonal perturbations. The result gives a new geometric
characterization of admissibility and clarifies the role of orthant-monotonicity in additive $D$-stability and
diffusion-induced instability.
\end{abstract}

\begin{IEEEkeywords}
Matrix measure; logarithmic norm; orthant-monotonic norm; admissible matrix measure; contraction theory; additive D-stability.
\end{IEEEkeywords}

\begin{center} {\bf MSC classification numbers:} 
15A60; 47A55.
\end{center}

\section{Introduction}

Let~$\mathbb D^n_{\geq 0} $ be the set of~$n\times n$ diagonal matrices with nonnegative entries.
   Suppose that a matrix~$A\in\R^{n\times n}$
is Hurwitz. 
Then the  system~$\dot x=Ax$ is asymptotically  stable,
and we may naturally expect that 
\begin{align}\label{eq:shift}
\dot x=Ax-Dx, \text{ with } D \in\mathbb D^n_{\geq 0},
\end{align}
is also asymptotically  stable, as we merely  add an independent  dissipation term for each state variable.
In other words, we may expect that~$A-D$ is also Hurwitz for any~$D\in\mathbb D^n_{\geq 0}$.
However, this is not so. For example, the~$2\times 2$ matrix~$A=\begin{bmatrix}
    1&-3\\1&-2
\end{bmatrix}$
is Hurwitz (as~$\det(A)>0$ and~$\tr(A)<0$), yet for any~$d_2>1$  we have that~$\det (A-\diag(0,d_2)) = 1-d_2 <0$, so~$A-\diag(0,d_2)$ is not Hurwitz. In general, 
stability is a global property of the interaction pattern between the state-variables,  and 
not a monotone function of diagonal damping.

The term~$-Dx$ in~\eqref{eq:shift} models  independent dissipation (or leakage) in each state variable. This is 
common in many linear and nonlinear
models representing  for example 
natural death rate in population dynamics, molecular degradation in chemical reaction networks  and systems biology, drug clearance in pharmacokinetics,
damping in mechanical systems, and more. Often one expects that if~$D$ is  
large enough then the term~$-Dx$ dominates the other terms in the vector field and guarantees stability, but as we saw in the example above  this is not necessarily true.

A similar stability problem  arises in the context of systems with diffusion, as shown for the first time in a famous paper by A. M. Turing~\cite{Turing_diff_1952}. The theory is for systems of PDEs, but  following Sontag~\cite{sontag_systems_bio_book} we   present   the basic ideas using a set of ODEs. Consider two systems: 
\be\label{eq:lin_sync}
\dot x=Ax \text{ and } \dot z=Az , 
\ee
with~$A\in\R^{n\times n}$ Hurwitz, that  are coupled via  a diagonal diffusion  term, i.e.,  
\begin{align}\label{eq:adz}
\dot x=Ax  +D (z-x),\nonumber \\
\dot z=Az  +D (x-z),
\end{align}
with~$D \in\mathbb D^n_{\geq 0}$.
Define a new state-vector~$p:[0,\infty) \to\R^{2n}$ by  $p:=T\begin{bmatrix}
    x\\z
\end{bmatrix}
$, with~$T:=\begin{bmatrix}
    I_n&I_n\\-I_n&I_n
\end{bmatrix}
$. Then 
\[
\dot p = \begin{bmatrix}
    A&0\\0&A-2D
\end{bmatrix}p.
\]
Since~$A\in\R^{n\times n}$ is Hurwitz, the coupled system~\eqref{eq:adz} will be asymptotically stable iff~$A-2D$ is Hurwitz. Furthermore, since~$p=\begin{bmatrix} z+x& z-x \end{bmatrix}^\top$, the two systems in~\eqref{eq:lin_sync}
synchronize (i.e,~$ x(t)-z(t) \to 0$ as~$t\to\infty$) iff~$A-2D$ is Hurwitz. 

Diffusion terms appear in many dynamical  models representing for example migration in ecological networks,   kinetic interactions in chemical reaction networks, electrical or chemical interactions  between neurons, information flow in cooperative systems, 
and more.

The discussion above motivates the following definition:  
A matrix~$A \in \R^{n \times n}$ is called additively $D$-stable if~$A-D$ is Hurwitz for any~$D\in\mathbb D^n_{\geq 0 } $. According to Kushel~\cite{Unifying_Matrix_Stability_2019},  this notion  first appeared in~\cite{cross_3_types} (see also~\cite{LOGOFET200575}).

\begin{Example}\label{exa:22case}
If~$A\in\R^{2\times 2}$ then~$A-D$ is Hurwitz iff
\[
\tr(A-D)=a_{11}-d_1+a_{22}-d_2 <0
\]
and
\[
\det(A-D) = \det(A)+d_1d_2-d_1a_{22}-d_2a_{11} >0,
\]
so~$A\in\R^{2\times 2}$ will be additively~$D$-stable iff
\[
\tr(A)<0, \; 
\det(A)>0, \text{ and }  a_{11},a_{22}\leq 0. 
\]
\end{Example}

In general, the problem of characterizing  the class  of  additively  $D$-stable  matrices is open, except for low orders of~$n$~\cite{HERSHKOWITZ1992161}. In some special cases,
there are also several known necessary conditions and sufficient conditions for additive $D$-stability~\cite{Unifying_Matrix_Stability_2019,HERSHKOWITZ1992161}. For example, if~$A$ is Metzler (i.e.,~$a_{ij}\geq 0$ for all~$i\not= j$) then~$A$ is 
additively $D$-stable iff it is Hurwitz.   

Additive $D$-stability has found many applications in the stability analysis of continuous-time 
reaction-diffusion models 
(see, e.g.,~\cite{GE2009736} and the references therein). 
In this context, the
matrix~$A$ represents the
linearization of the reaction  dynamics at a steady-state, and additive $D$-stability rules out  
 the possibility of diffusion-driven instabilities.  

Li and Wang~\cite{Wang2001DiffusionDrivenInstability} 
introduced the related  notion of an \emph{admissible} matrix measure, namely, 
a matrix measure~$\mu:\R^{n\times n}\to\R$ such that
\be\label{eq:def_admi}
\mu(-D)\leq 0 \text{ for any } D\in\mathbb D^n_{\geq 0}, 
\ee
and used it to analyze  
additive D-stability. Indeed, 
\be\label{eq:uniform_cond}
\mu(A)<0, \text{ with } \mu  \text{ admissible},
\ee 
implies that 
for any~$D\in\mathbb D^n_{\geq0 }$ we have 
\begin{align}\label{eq:AminusD}
\mu(A-D)&\leq\mu(A)+\mu(-D) \nonumber\\
&\leq \mu(A)\nonumber\\
&<0,
\end{align}
where we used the fact that matrix measures are sub-additive.
This implies in particular
that~$A-D$ is Hurwitz, so~$A$ is additively $D$-stable.  However, the condition in~\eqref{eq:uniform_cond} is in fact stronger than additive $D$-stability, as it implies   negativity of~$\mu(A-D)$ for all~$D\in\mathbb D^n_{\geq 0}$ with respect to the \emph{same measure}~$\mu$.

Here, we give a new and  complete   characterization of such matrix measures by proving that a matrix measure is admissible if and only if it is    induced by an orthant-monotonic  norm.
This links the geometric 
notion of orthant-monotonicity with the stability-motivated notion of admissible matrix measures. 
Moreover, we show that the class of   matrix measures induced by orthant-monotonic norms
is the maximal set of matrix measures that are relevant for additive $D$-stability (see Theorem~\ref{thm:compl} below).

We use standard notation. 
Vectors [matrices] are denoted by small [capital] letters.
The non-negative [positive] orthant in~$\R^n$ is~$\R^n_{\geq 0}:=\{x\in\R^n : x_i\geq 0 ,i=1,\dots,n\}$
[$\R^n_{> 0}:=\{x\in\R^n : x_i >  0 ,i=1,\dots,n\}$].
For a matrix~$A\in\R^{n\times m}$, $\abs(A)$ is the 
matrix whose entries  are the absolute values of~$a_{ij}$ for all~$i,j$.
A matrix~$A\in\R^{n\times n}$ is called Hurwitz if every eigenvalue of~$A$ has a negative real part. It is called Metzler if all its off-diagonal entries are non-negative. 
For two matrices $A,B\in\R^{n\times m}$, we write~$A\leq B$ if~$a_{ij}\leq b_{ij}$ for all~$i,j$. 
We use~$I_n$ to denote the~$n\times n$ identity matrix. A matrix is called sign-diagonal if it is diagonal and  each diagonal entry is either~$1$ or~$-1$. 
The set of all~$n\times n$ positive [non-negative] diagonal matrices, that is, diagonal matrices with positive [non-negative] diagonal entries is denoted by~$\mathbb D^n_{>0}$ [~$\mathbb D^n_{\geq 0}$].

 Given a vector  norm~$|\cdot|:\R^n\to\R_{\geq0 }$, the induced matrix norm is
 \[
 \|A\|:=\max_{|x|=1}|Ax|,
 \] and the induced matrix measure (also called logarithmic norm~\cite{strom1975logarithmic}) is
 \[
\mu(A):=\lim_{\varepsilon\to 0^+}\frac{\|I_n+\varepsilon A  \| -1}{\varepsilon} .
\]
In other words,
 \be\label{eq:exap_mu}
\|I_n+\varepsilon A \| =1+\varepsilon\mu(A)+o(\varepsilon),
 \ee
and  also  
\[
\mu(A)=\lim_{\varepsilon\to 0^+} \frac{\ln(\|\exp(A\varepsilon)\|)}{\varepsilon}.
\]

Induced matrix norms and matrix measures play an important role in many fields of applied mathematics.
Examples include 
 numerical linear algebra~\cite{strom1975logarithmic},  
  the asymptotic analysis of nonlinear dynamical systems using contraction theory~\cite{sontag_cotraction_tutorial,LOHMILLER1998683,bullo_contraction,kordercont}, and control synthesis~\cite{Manchester2015ControlCM}.

A matrix measure
is sub-additive, i.e.~$\mu(A+B)\leq \mu(A)+\mu(B)$ for any~$A,B\in\R^{n\times n}$.
Also,~$s(A)\leq
\mu(A) \leq \|A\|$,
where~$s(A)$ denotes the largest real part of all
 the eigenvalues of~$A$, so~$\mu(A)<0$ implies that~$A$ is Hurwitz. 
 Also, 
\be\label{eq:mat_measure_I}
 \mu(A+cI_n)=\mu(A)+ c  \text{ for any } A\in\R^{n\times n}, c\in\R, 
 \ee
see e.g.~\cite{vid_desoer}.

 For~$p\geq 1$, 
the~$\ell_p$ norm of a vector $x\in\R^n$ 
is~$|x|_p : =\left (\sum_{i=1}^n (\abs(x_i))^p \right )^{1/p}$, and~$|x|_\infty:=\max_i \abs(x_i)$. For the~$\ell_1$, $\ell_2$, and~$\ell_\infty$ norms
there exist  simple closed-form expressions for the induced matrix norms and matrix measures, making them  particularly useful for  contraction
analysis, see Table~\ref{table:lp_norms}.  

\begin{table}[ht]
\centering
  \[
\begin{array}{|c|c|c|}
\hline
\text{Vector norm} & \text{Induced matrix norm} & \text{Induced matrix measure} \\ \hline

|x|_1=\sum_{i=1}^n \abs(x_i ) 
&
\|A\|_1=\max_{j} \sum_{i=1}^n \abs(a_{ij})
&
\mu_1(A)=\max_{j}\left(a_{jj}+\sum_{i\neq j}\abs(a_{ij})\right)
\\ \hline

|x|_2=\left(\sum_{i=1}^n \abs^2(x_i) \right)^{1/2}
&
\|A\|_2=\sqrt{\lambda_{\max}(A^{\top}A)}
&
\mu_2(A)=\lambda_{\max}\!\left(\frac{A+A^{\top}}{2}\right)
\\ \hline

|x|_{\infty}=\max_i \abs(x_i)
&
\|A\|_{\infty}=\max_{i} \sum_{j=1}^n \abs(a_{ij})
&
\mu_{\infty}(A)=\max_{i}\left(a_{ii}+\sum_{j\neq i}\abs(a_{ij})\right)
\\ \hline
\end{array} 
\]
\caption{$\ell_p$ norms, their induced matrix norms, and induced  matrix measures for~$p=1,2,\infty$.}\label{table:lp_norms}
\end{table}

The remainder of this paper  is organized as follows. The next section reviews some preliminary results that are used later on. Section~\ref{sec:main} 
describes our main results. The final section concludes and describes several directions for further research.

\section{Preliminaries}
In this section, we
review some known definitions and results that will be used later on. For more details and proofs, see~\cite[Chapter 5]{matrx_ana}.

\subsection{Monotonic and absolute norms} 
A   norm~$|\cdot| : \R^n \to \R_{\geq 0}$ is called~\emph{monotonic} if for any~$x,y\in\R^n$ we have 
\[
\abs(x_i)\leq \abs(y_i) \text{  for all } i \implies |x| \le |y|. 
\]

A   norm~$|\cdot| : \R^n \to \R_{\geq 0}$ is  called~\emph{absolute} if  
\[
|x| = | \abs(x) |  \text{ for any } x\in\R^n,
\]
  where~$\abs(x):=\begin{bmatrix}
      \abs(x_1)&\dots&\abs(x_n)
  \end{bmatrix}^\top$. 

  It is well-known that a vector norm is monotonic if and only if it is absolute~\cite{Bauer1961_mono_norms}. In other words, a monotonic norm   only depends  on the absolute values of the vector entries.  
  For example, it is clear from the definition that any~$\ell_p$ norm is absolute and thus monotone. Furthermore, if~$D\in\R^{n\times n}$ is diagonal and non-singular 
  and~$p\in[0,\infty]$
  then the scaled norm~$|x|_{p,D}:=|D x|$ is an absolute (and thus monotonic)  norm,  and the induced matrix measure is~$\mu_{p,D}(A)=\mu_p (DAD^{-1})$. 

Geometrically, a monotonic norm has a   unit ball that is convex, centrally symmetric (about the origin), and  invariant under sign changes.

\subsubsection{Order-preserving properties of monotonic norms}
If~$|\cdot|:\R^n\to\R_{\geq 0} $ is absolute (i.e., monotonic) then  the induced matrix norm and matrix measure enjoy some ``order-preserving'' properties that have found interesting applications (see for example~\cite[Proposition 2]{Ofir2022minimum}\cite{network_contractive}).

For example, for any~$A\in\R^{n\times n}$ and~$x\in\R^n$, we have 
\begin{align*}
|Ax| 
  & = | \abs(Ax)| \\
  &\leq  |\abs(A)\abs(x)|\\
  &\leq \|\abs(A)\||x|.
\end{align*}
and thus
\begin{align}\label{eq:A_vs_absA}
 \|A\|\leq \|\abs(A)\| \quad \text{for all }A\in\R^{n\times n}.
\end{align}
Also,  if~$0\leq A\leq B$ then 
\begin{align*}
    |Ax|&=| \abs(Ax)|\\
        &\leq |A \abs(x)|\\
        &\leq |B\abs(x)|\\
        &\leq \|B\| |\abs(x)|\\
        &=\|B\||x|,
\end{align*}
so we conclude that for a matrix norm induced by a monotonic norm, we have 
\be\label{eq:mat_norm_mom}
0\leq A\leq B \implies 
\|A\| \leq \|B\|.
\ee
  A similar 
  argument shows that if~$\mu$ is a   matrix measure induced by a monotonic norm,~$A,B\in\R^{n\times n}$ are Metzler  and~$A\leq B$ then
  $\mu(A)\leq \mu(B)$.

Monotonic norms also have a kind of ``Perron-Frobenius'' property. To explain  this, fix~$A\in\R^{n\times n}$ with~$A\geq 0$. Then
for any~$x\in\R^n$, we have 
\[
\abs(Ax)\leq A \abs(x), 
\]
  so
\begin{align}\label{eq:nohold}
    |Ax|=|\abs(Ax)| 
        \leq  |A\abs(x)|.
\end{align}
 This implies that 
\be\label{eq:maxrn}
A\geq 0 \implies \|A\|:=\max_{|x|=1}|Ax|=\max_{|x|=1,\; x\geq 0}|Ax|.
\ee
In other words, it is always possible to find a maximizing  vector~$x $   in the non-negative orthant.

  \subsection{Orthant-monotonic norms}
  A more general class  of norms   are orthant-monotonic norms~\cite{Gries1967} (also called   weak monotonic norms, see, e.g.~\cite{Lavric1999orthantmono}).
A   norm~$|\cdot| : \R^n \to \R_{\geq 0}$ is   
  \emph{orthant-monotonic} 
if for any~$x,y\in\R^n$ we have
  \[
  x_i y_i \ge 0 \text{ and }   |x_i| \le |y_i|  
 \text{ for all } i \implies  |x| \le |y|.
 \]
Note that inside one orthant, moving away from the origin coordinatewise means moving outward.
An orthant-monotone norm respects this order: going outward in every coordinate cannot reduce your distance from the origin as measured by the norm.

  \begin{Example}\label{exa:ortant_not_monotone}
      Consider the norm on~$\R^2$ defined by  
\begin{equation}\label{eq:exa_om}
    |x|: = \begin{cases}
        |x|_\infty , & \text{if }\: x_1x_2 \ge 0, \\
        |x|_1, & \text{if }\: x_1x_2 < 0.
    \end{cases}
\end{equation}
This norm is orthant-monotonic. 
The unit circle in  this norm is shown in Fig.~\ref{fig:norm}.
This norm is not absolute, as 
\begin{equation}\label{eq:counterexample}
 2 =   |\begin{bmatrix}
        1&  -1
    \end{bmatrix}^\top |\neq  |\begin{bmatrix}
        1&1
    \end{bmatrix}^\top|=1.
\end{equation}
Hence, 
this norm is  orthant-monotonic, but not monotonic.
  \end{Example}

\begin{figure}
    \centering

\begin{tikzpicture}[scale=2]
\draw[->] (-1.3,0) -- (1.3,0) node[right] {$x_1$};
\draw[->] (0,-1.3) -- (0,1.3) node[above] {$x_2$};

\draw[thick,->] (-1.3,0) -- (1.3,0);
\draw[thick,->] (0,-1.3) -- (0,1.3);

\draw[thick]
(0,1) -- (1,1) -- (1,0) -- (0,-1) -- (-1,-1) -- (-1,0) -- cycle;
\foreach \p in { (0,0) }
\fill \p circle (0.03);

\node[right] at (0.9,-.2) {$1$};
\node[left]  at (-1,0.2) {$-1$};
\node[above] at (-.2,.9) {$1$};
\node[below] at (0.2,-1) {$-1$};

\end{tikzpicture}
 \caption{Unit ball    in  the orthant-monotonic norm defined in~\eqref{eq:exa_om}. }
    \label{fig:norm}
\end{figure}

Note that in the example above, $x:=\begin{bmatrix}
    1&-1
\end{bmatrix}^\top$ satisfies~$|\abs(x)| <|x|$. Taking~$A=I_2\geq0 $ shows that 
  for orthant-monotonic norms the key property in~\eqref{eq:nohold}
does not hold.

Geometrically, an orthant-monotonic  norm is characterized by a unit ball that is 
 convex, centrally symmetric (about the origin)  and whose boundary is monotone inside each orthant, but may have  different slopes in different orthants (and, therefore, the norm is not necessarily monotone).
  Orthant-monotonic norms have found  
 applications in
 approximation theory~\cite{orthant_mono_and_linear_programs},
 sparse optimization~\cite{Chancelier2021CapraConvex}, and norm-induced preferences in economic theory~\cite{Gershkov2020MonoNormsVoting}.

A norm~$|\cdot|:\R^n\to \R_{\geq 0}$ is orthant-monotonic if and only if 
\be\label{eq:ortmon_proj}
|\begin{bmatrix}
    x_1&\dots& &x_{j-1} &0&x_{j+1}&\dots& x_n
\end{bmatrix}^\top| 
\leq
|\begin{bmatrix}
    x_1&\dots &x_{j-1} &x_j&x_{j+1}&\dots& x_n
\end{bmatrix}^\top| 
\ee
for all~$x\in\R^n$ and all~$j\in\{1,\dots,n\}$. In other words,   projecting
 a vector on a coordinate axis can
only lead to  a smaller norm. For example, consider the norm on~$\R^2$ 
with unit ball defined by  
  the parallelogram with vertices~$\begin{bmatrix}
    2&2
\end{bmatrix}^\top$,
$\begin{bmatrix}
-2&-2
\end{bmatrix}^\top$,
$\begin{bmatrix}
    1&-1
\end{bmatrix}^\top$, and
$\begin{bmatrix}
    -1&1
\end{bmatrix}^\top$.
This norm is not orthant-monotonic, as
\[
\left | \begin{bmatrix}
    2\\ 0
\end{bmatrix}\right|>1=\left|\begin{bmatrix}
2\\2
\end{bmatrix}
\right  |.
\]

Summarizing the relations between the norms reviewed above, we have
\[
\ell_p \text{ norms }
\subset \text{ monotonic norms } = \text { absolute norms }\subset 
\text{ orthant-monotonic norms, }
\]
where~$\subset$ denotes strict inclusion.

\subsection{Induced matrix norms of  diagonal matrices}
Monotonic and orthant-monotonic  norms can also be characterized 
by the  effect of their induced matrix norms on diagonal matrices. 

An induced     
 matrix norm~$\|\cdot\|:\R^{n\times n}\to\R_{\geq 0}$ is
   monotonic   
 if and only if
 \[
 \|D\|=1 \text{ for any } D \text{ such that } \abs(D)=I_n,
 \]
(see~\cite{diag_transform_1988}), 
 and 
 if and only if 
 \[
  \|D\| = \max_i \{\abs(d_{ii})\}
 \text{ for any diagonal matrix } D  , 
 \]
   (see~\cite[Theorem 3]{Bauer1961_mono_norms}). 
   The second characterization  implies that if~$\mu$ is induced by a monotonic
   matrix measure and~$D\in\mathbb D^n_{\geq 0}$
then~$\mu(-D)=\max_i\{-d_{ii}\}\leq 0 $, so~$\mu$ is admissible~\cite{sontag2010contractive}.
In particular,  a matrix measure induced by a (diagonally scaled)~$\ell_p$ norm is admissible. 


An induced  matrix norm~$\|\cdot\|:\R^{n\times n}\to\R_{\geq 0}$ is   orthant-monotonic   norm if and only if
\begin{equation}\label{eq:bound_orthantmono}
    \|D\| = \max_i \{d_{ii}\} \text{ for any } D\in\mathbb D^n_{\geq 0}
\end{equation}
(see~\cite{Funderlic1979}).

\begin{Remark}
   In this paper, we only consider matrix norms and matrix measures that are induced by a norm in~$\R^n$. 
To shorten the notation,  from now on we say that a matrix norm or a  matrix measure 
is monotonic [orthant-monotonic] if it  is induced by a monotonic [orthant-monotonic] norm.
\end{Remark}

\section{Main results}\label{sec:main}

\subsection{Characterization of 
orthant-monotonic  matrix measures  }

Our first main result in this section provides a new  characterization of orthant-monotonic matrix measures and shows that this class coincides with the class of admissible matrix measures. introduced   in~\cite{Wang2001DiffusionDrivenInstability}.
\begin{Theorem}\label{thm:compl}
    Let~$\mu:\R^{n\times n}\to\R$ be a matrix measure. The following four conditions are equivalent. 
\begin{enumerate}
    \item \label{item:induced-orth_mono} $\mu$ is   orthant-monotonic;
  \item \label{item:admm} $\mu$ is admissible;
\item \label{item:max_dii_on_D_NON
NEGA} $\mu$ satisfies 
\[
\mu(D)=\max_i\{d_{ii}\} \text { for any } D\in \mathbb D^n_{\geq 0}; 
\]
  \item \label{item:nega
_on_A}
there exists a matrix~$A\in\R^{n\times n}$ such that~$\mu(A-D)<0$ for all~$D\in\mathbb D^n_{\geq 0}$.
\end{enumerate}
\end{Theorem}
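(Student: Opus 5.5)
Our plan is to prove the cycle 1)$\Rightarrow$3)$\Rightarrow$2)$\Rightarrow$4)$\Rightarrow$1). The tool throughout is the Funderlic characterization~\eqref{eq:bound_orthantmono} together with~\eqref{eq:exap_mu} and~\eqref{eq:mat_measure_I}.

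\emph{1)$\Rightarrow$3).} For $D\in\mathbb D^n_{\geq0}$ and small $\varepsilon>0$, the matrix $I_n+\varepsilon D$ lies in $\mathbb D^n_{\geq 0}$. By~\eqref{eq:bound_orthantmono},
\[
\|I_n+\varepsilon D\|=1+\varepsilon\max_i d_{ii},
\]
and letting $\varepsilon\to0^+$ gives $\mu(D)=\max_i d_{ii}$.

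\emph{3)$\Rightarrow$2).} Given $D\in\mathbb D^n_{\geq0}$, let $c:=\max_i d_{ii}$. Then $cI_n-D\in\mathbb D^n_{\geq0}$, and its largest diagonal entry is $c-\min_i d_{ii}$. Using~\eqref{eq:mat_measure_I},
\[
\mu(-D)=\mu(cI_n-D)-c=-\min_i d_{ii}\le 0.
\]

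\emph{2)$\Rightarrow$4).} Take $A=-I_n$. Then $\mu(-I_n-D)=-1+\mu(-D)\le -1<0$.

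\emph{4)$\Rightarrow$1).} This is the main step and the main obstacle. Suppose $\mu(A-D)<0$ for all $D\in\mathbb D^n_{\geq0}$. We first show that $\mu(-D)\le 0$ for all such $D$. Replace $D$ by $tD$ and use positive homogeneity of $\mu$:
\[
t\,\mu\left(\tfrac1t A-D\right)=\mu(A-tD)<0 .
\]
Hence $\mu(\tfrac1tA-D)<0$. Letting $t\to\infty$, continuity of $\mu$ (it is Lipschitz, $|\mu(X)-\mu(Y)|\le\|X-Y\|$) gives $\mu(-D)\le0$. So admissibility follows from 4), and it remains to show 2)$\Rightarrow$1).

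By Funderlic's theorem, it suffices to verify~\eqref{eq:bound_orthantmono}, or equivalently the projection property~\eqref{eq:ortmon_proj}. Let $P_j$ be the diagonal matrix with $0$ at position $j$ and $1$ elsewhere. We aim to show $\|P_j\|\le1$. Write $P_j=I_n-E_{jj}$ and set $D_\alpha:=I_n-\alpha E_{jj}$ for $\alpha\in[0,1]$. The semigroup $\exp(-sE_{jj})=I_n-(1-e^{-s})E_{jj}$ runs through the family $D_\alpha$ with $\alpha\in[0,1)$. By admissibility $\mu(-E_{jj})\le0$, and the standard bound $\|\exp(Xs)\|\le e^{\mu(X)s}$ then gives $\|\exp(-sE_{jj})\|\le1$ for all $s\ge0$. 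Letting $s\to\infty$ and using continuity of the norm yields $\|P_j\|\le1$, which is exactly~\eqref{eq:ortmon_proj}.

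Thus the norm is orthant-monotonic. The delicate points are the limit arguments (homogeneity/continuity of $\mu$, and the bound $\|e^{Xs}\|\le e^{\mu(X)s}$), which we would state as standard facts.
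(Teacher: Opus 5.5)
Your proof is correct, but it closes the loop back to orthant-monotonicity differently from the paper.

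\textbf{Structure.} The paper proves three separate equivalences, 1)$\Leftrightarrow$3), 2)$\Leftrightarrow$3) and 2)$\Leftrightarrow$4). Its return direction 3)$\Rightarrow$1) goes through Funderlic's diagonal characterization~\eqref{eq:bound_orthantmono}. For $D\in\mathbb D^n_{>0}$ it bounds $\|D\|=\|\exp(\ln D)\|\le\exp(\mu(\ln D))=\max_i d_{ii}$. It gets the reverse inequality from the eigenvalues of $D$, and passes to $\mathbb D^n_{\geq 0}$ by continuity. You instead prove 2)$\Rightarrow$1) directly through the projection characterization~\eqref{eq:ortmon_proj}. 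You use the idempotent semigroup $\exp(-sE_{jj})$, the same bound $\|e^{Xs}\|\le e^{\mu(X)s}$, and the limit $s\to\infty$.

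\textbf{What your route gives.} You use admissibility only on the $n$ matrices $-E_{jj}$. So you show the sharper fact that $\mu(-E_{jj})\le 0$ for all $j$ already forces orthant-monotonicity. You need Funderlic's theorem only in the direction 1)$\Rightarrow$3). You never need the bound $\mu(X)\ge s(X)$ that the paper uses in 2)$\Rightarrow$3).

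\textbf{A point where you are more careful.} In 3)$\Rightarrow$2) you shift by $cI_n$, so condition 3) is applied only to a nonnegative diagonal matrix. The paper applies 3) directly to $-D$, and in 3)$\Rightarrow$1) to $\ln D$. Both can have negative entries, so the paper tacitly needs your translation argument to extend 3) to all diagonal matrices.

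\textbf{What the paper's route gives.} It produces the full identity $\|D\|=\max_i d_{ii}$ on $\mathbb D^n_{\geq 0}$ as an intermediate result.
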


\begin{IEEEproof}

  \noindent
  \ref{item:induced-orth_mono})$\implies$\ref{item:max_dii_on_D_NON
NEGA}):
  Suppose  that~$\mu$ is  orthant-monotonic. Fix~$D\in\mathbb D_{\geq 0}^n$. Then using~\eqref{eq:bound_orthantmono} gives 
    \begin{equation}
        \mu(D) = \lim_{h \to 0^+} \frac{\|I_n + hD\| - 1}{h} = \lim_{h \to 0^+} \frac{\max_i \{1 + hd_{ii}\} - 1}{h} = \max_i \{d_{ii}\}.
    \end{equation}

\noindent
\ref{item:max_dii_on_D_NON
NEGA})$\implies$\ref{item:induced-orth_mono}):
Suppose that~$\mu$   satisfies property \ref{item:max_dii_on_D_NON
NEGA})  in the  theorem. Fix~$D \in \mathbb D^{n}_{>0}$. Define  a diagonal matrix~$\ln( D )\in \R^{n \times n}$ by~$\ln( D):=\diag(\ln(d_{11}),\dots,\ln(d_{nn}))$. 
    Then
\[
\|D\|= \| \exp(\ln(D)) \| \leq \exp(\mu(\ln(D)))=\exp(\max_i \ln({d_{ii}}))=\max_i \{d_{ii}\},
\]
where the inequality follows from the fact that~$\|\exp(A)\|\leq \exp(\mu(A))$ for any~$A$ (see e.g.~\cite{strom1975logarithmic}).
  Since every~$d_{ii}$ is a positive eigenvalue of~$D$, we also have~$\|D\|\geq \max_i \{d_{ii}\}$. We conclude that~$\|D\|=\max_i \{d_{ii}\}$ for any~$D\in\mathbb D^n_{>0}$.  Now fix~$D\in\mathbb D^n_{\geq 0}$. Then  
   by continuity of norms,
    \begin{equation}
        \|D\| = \lim_{\varepsilon \to 0^+} \|D + \varepsilon I_n\| = \lim_{\varepsilon \to 0^+} \max_{i} \{d_{ii} + \varepsilon\} = \max_i \{d_{ii}\}.
    \end{equation}
    We conclude that~\eqref{eq:bound_orthantmono} holds, 
    and thus~$\mu$ is orthant-monotonic.

\noindent
\ref{item:max_dii_on_D_NON
NEGA})$\implies$\ref{item:admm}):
Fix~$D\in\mathbb D^n_{\geq 0}$. 
Using property \ref{item:max_dii_on_D_NON
NEGA}) gives
$
\mu(-D)=\max_i\{-d_{ii}\}\leq  0,
$
so~$\mu$ is admissible.

\noindent
\ref{item:admm})$\implies$\ref{item:max_dii_on_D_NON
NEGA}): 
Assume that~$\mu$ is admissible. Fix a diagonal matrix~$D\in\R^{n\times n}$. 
Let~$m:=\min_i \{d_{ii}\}$. 
Then~$D-mI_n \in \mathbb D^n_{\geq 0}$, and since~$\mu $ is admissible,
\[
\mu(-(D-m I_n))\leq 0 .
\]
Since~$-(D-m I_n)$ has an eigenvalue zero, we also have~$\mu(-(D-m I_n))\geq 0$, so~$\mu(-(D-m I_n))=0$. Thus,  
\begin{align*}
    0&=\mu(mI_n-D)\\
    &= m+\mu(-D),
\end{align*}
so
\be\label{eq:muofminusD}
\mu(-D)=-\min_i \{d_{ii}\}. 
\ee
Since~$D$ is an arbitrary diagonal matrix, 
we conclude, in particular, that~\ref{item:admm}) implies~\ref{item:max_dii_on_D_NON
NEGA}. 

\noindent \ref{item:admm})$\implies$\ref{item:nega
_on_A}):
Suppose that~$\mu$ is admissible. Then~$\mu(-I_n)=-1$, and~$\mu(-I_n-D) \le \mu(-I_n)+\mu(-D)<0$ for any~$D\in\mathbb D^n_{\geq 0}$, so property~\ref{item:nega
_on_A}) holds for~$A=-I_n$.

\noindent \ref{item:nega
_on_A})$\implies$\ref{item:admm}):
  Assume that property \ref{item:nega
_on_A}) holds. Note that applying this condition  with~$D=0$ gives~$\mu(A)<0$.  Fix~$D\in\mathbb D^n_{\geq 0}$. Then~$\mu(A-\alpha D)\leq\mu(A)+\mu(-\alpha D)<0$ for any~$\alpha>0$. Using 
    the  positive-homogeneity of matrix measures  yields
    \begin{align*}
       0&> \mu(A - \alpha D) \\&= \alpha\mu(\alpha^{-1}A-D).
    \end{align*}
    Taking~$\alpha>0$
    sufficiently large implies that~$0\geq \mu(-D)$, 
  so~$\mu$ is admissible.
  This completes the proof of Theorem~\ref{thm:compl}.
\end{IEEEproof}

  \begin{Remark}
      Note that property~\ref{item:nega
_on_A}) in Theorem~\ref{thm:compl} 
      implies that orthant-monotonic norms are the maximal class of norms for which contraction of 
$A-D$ is uniform over all 
$D\in\mathbb D^n_{\geq 0}$.
  \end{Remark}
%
%
 
\begin{Example}
Li and Wang~\cite{Wang2001DiffusionDrivenInstability}   showed that the matrix measure~$\mu_{\infty,T}:\R^{2\times 2}\to\R$ induced by the scaled $\ell_\infty$ norm~$|x|_{\infty,T}:=|Tx|_\infty$, with~$T:=\begin{bmatrix}
    1&2\\1&3
\end{bmatrix}$,
is not admissible. Indeed, for~$D=\diag(1,2)\in\mathbb D^2_{\geq 0}$, we have~$\mu_{\infty,T}(-D)=3$, so~\eqref{eq:def_admi}  does not hold. It follows from Theorem~\ref{thm:compl}  that the scaled norm~$|x|_{\infty,T}=\max\{\abs(x_1+2x_2),\abs(x_1+3x_2)\}$ is not orthant-monotonic. Indeed, 
\[
1= \left |\begin{bmatrix}
    2\\ -1
\end{bmatrix}  \right |_{\infty,T} <
\left |\begin{bmatrix}
     2\\ 0
\end{bmatrix} 
\right |_{\infty,T}=2  , 
\]
so~\eqref{eq:ortmon_proj} does not hold.
\end{Example}

The proof that~\ref{item:induced-orth_mono})$\implies$\ref{item:max_dii_on_D_NON
NEGA}) in  Theorem~\ref{thm:compl} above     holds for any diagonal matrix~$D$, and not just  for nonnegative diagonal matrices. This implies the following result.
\begin{Corollary}\label{coro:diago}
    If~$\mu$ satisfies any  of the equivalent conditions in Theorem~\ref{thm:compl}  then 
    \[
\mu(D)=\max_i\{d_{ii}\} \text{ for any diagonal matrix }   D\in\R^{n\times n}. 
\]
\end{Corollary}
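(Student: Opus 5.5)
The plan is to use the identity \eqref{eq:muofminusD} obtained in the proof of \ref{item:admm})$\implies$\ref{item:max_dii_on_D_NON
NEGA}) of Theorem~\ref{thm:compl}. That step was carried out for an \emph{arbitrary} diagonal matrix, not only for nonnegative ones. Since all four conditions of Theorem~\ref{thm:compl} are equivalent, it suffices to assume that $\mu$ is admissible.

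Fix an arbitrary diagonal $D\in\R^{n\times n}$ and apply the argument to $E:=-D$, which is again diagonal. Set $m:=\min_i\{e_{ii}\}=-\max_i\{d_{ii}\}$. Then $E-mI_n\in\mathbb D^n_{\geq 0}$, so admissibility gives $\mu(-(E-mI_n))\leq 0$.

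For the reverse inequality, note that $-(E-mI_n)$ is diagonal with a zero diagonal entry, hence has eigenvalue $0$. Since $s(\cdot)\le\mu(\cdot)$, this gives $\mu(-(E-mI_n))\geq 0$, and therefore $\mu(mI_n-E)=0$.

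The translation property \eqref{eq:mat_measure_I} turns this into $m+\mu(-E)=0$, that is, $\mu(D)=\mu(-E)=-m=\max_i\{d_{ii}\}$. This is the claim.

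There is essentially no obstacle. The only care needed is to check that the proof of \ref{item:admm})$\implies$\ref{item:max_dii_on_D_NON
NEGA}) never used the sign of $d_{ii}$. It did not: the shift by $m$ always produces a nonnegative diagonal matrix with a zero entry, whatever the signs of the $d_{ii}$.

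Alternatively, one can start from condition \ref{item:induced-orth_mono}) and follow the remark preceding the corollary. For small $h>0$ the matrix $I_n+hD$ lies in $\mathbb D^n_{>0}$, so \eqref{eq:bound_orthantmono} gives $\|I_n+hD\|=1+h\max_i\{d_{ii}\}$, and the defining limit of $\mu$ then yields the same conclusion.
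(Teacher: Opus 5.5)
Your proof is correct. The paper justifies the corollary with your \emph{alternative} argument: it notes that the computation in 1)$\Rightarrow$3), namely $\|I_n+hD\|=\max_i\{1+hd_{ii}\}$ via \eqref{eq:bound_orthantmono}, works for any diagonal $D$. This relies on $I_n+hD\in\mathbb D^n_{>0}$ for small $h>0$, a detail the paper leaves implicit and you state explicitly.

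Your main argument takes a different route. It reuses the shift argument from 2)$\Rightarrow$3), whose conclusion \eqref{eq:muofminusD} the paper already states for an arbitrary diagonal $D$; replacing $D$ by $-D$ there gives the claim at once.

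The two routes rely on different ingredients:
\begin{itemize}
\item The paper's route uses Funderlic's characterization \eqref{eq:bound_orthantmono} of orthant-monotonic induced norms, together with the limit definition of $\mu$.
\item Your route, once admissibility is assumed, uses only two properties shared by every matrix measure: translation invariance \eqref{eq:mat_measure_I} and $s(A)\le\mu(A)$.
\end{itemize}
So your version shows that admissibility by itself forces $\mu(D)=\max_i\{d_{ii}\}$ for all diagonal $D$, without reference to the underlying norm. The paper's version is instead a one-line computation once the cited norm characterization is available.
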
 

\subsection{Explicit bounds for    the matrix measure under diagonal perturbations}
We next show  that for orthant-monotonic matrix measures a 
diagonal perturbation  shifts the matrix measure in a controlled way.

\begin{Proposition}
    Let~$\mu:\R^{n\times n}\to\R$ be an orthant-monotonic matrix measure. Then for any~$A\in \R^{n\times n}$ and any diagonal matrix~$D\in\R^{n\times n}$, we have
    \be\label{eq:chattfp}
    \mu(A)-\max_i\{d_{ii}\}\leq \mu(A-D)\leq\mu(A)-\min_i\{d_{ii}\}.
    \ee
\end{Proposition}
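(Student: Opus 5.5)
The plan is to derive both inequalities in~\eqref{eq:chattfp} from sub-additivity of~$\mu$ combined with Corollary~\ref{coro:diago}. Corollary~\ref{coro:diago} gives, for an orthant-monotonic matrix measure, $\mu(E)=\max_i\{e_{ii}\}$ for every diagonal matrix~$E\in\R^{n\times n}$, including diagonal matrices whose entries have mixed signs.

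For the upper bound, write~$A-D=A+(-D)$ and apply sub-additivity:
\[
\mu(A-D)\leq \mu(A)+\mu(-D).
\]
Applying Corollary~\ref{coro:diago} to the diagonal matrix~$-D$ gives~$\mu(-D)=\max_i\{-d_{ii}\}=-\min_i\{d_{ii}\}$. This is the right-hand inequality. Equivalently, one could use~\eqref{eq:muofminusD} from the proof of Theorem~\ref{thm:compl}, which was established for an arbitrary diagonal~$D$.

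For the lower bound, write~$A=(A-D)+D$ and again use sub-additivity:
\[
\mu(A)\leq \mu(A-D)+\mu(D).
\]
By Corollary~\ref{coro:diago}, $\mu(D)=\max_i\{d_{ii}\}$, and rearranging gives~$\mu(A)-\max_i\{d_{ii}\}\leq\mu(A-D)$.

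There is no real obstacle. The only point needing care is that~$D$ is an arbitrary diagonal matrix, not necessarily nonnegative, so item~\ref{item:max_dii_on_D_NON NEGA}) of Theorem~\ref{thm:compl} alone does not suffice. We need its extension to all diagonal matrices, which is exactly Corollary~\ref{coro:diago}. That extension can also be checked directly: shift~$D$ by a multiple of~$I_n$ to make it nonnegative and use~\eqref{eq:mat_measure_I}.
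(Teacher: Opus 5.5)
Your proof is correct and essentially identical to the paper's. Both use Corollary~\ref{coro:diago} to get $\mu(D)=\max_i\{d_{ii}\}$ and $\mu(-D)=-\min_i\{d_{ii}\}$ for an arbitrary (possibly sign-indefinite) diagonal $D$, and then derive the two bounds from sub-additivity applied to $A-D=A+(-D)$ and $A=(A-D)+D$.
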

Note that taking~$D=I_n$ and using~\eqref{eq:mat_measure_I} shows that the bounds are sharp.

\begin{IEEEproof}
First note that by Corollary~\ref{coro:diago},
\[
\mu(D)=\max_i\{d_{ii}\} \text{ and } 
\mu(-D)=-\min_i\{d_{ii}\}. 
\]
Thus,
\[
\mu(A-D)\leq\mu(A)+\mu(-D)=\mu(A)- \min_i\{d_{ii}\}.
\]
To prove the second bound in~\eqref{eq:chattfp}, note that
\begin{align*}
\mu(A)&=\mu(A-D+D)\\
&\leq \mu(A-D)+\mu(D)\\
&=\mu(A-D)+\max_i\{d_{ii}\},  
\end{align*}
and this completes the proof. 
\end{IEEEproof}
 
In the remainder of this section, we prove two more
properties of  orthant-monotonic measures.

Let~$\|\cdot\|_p$ denote  the matrix norm  induced by the~$\ell_p$ norm, which is absolute and thus monotonic. Using the explicit expression for~$\|\cdot\|_1$, $\|\cdot\|_2$, and~$\| \cdot \|_\infty$  (see Table~\ref{table:lp_norms}), it can be shown that all  these three norms satisfy~$\|A\|\geq \max\{\abs(a_{1,1}),\dots,
\abs(a_{n,n})\}$ for any~$A\in\R^{n\times n}$.  The next result generalizes this to  any orthant-monotonic  norm.

\begin{Lemma}\label{lem:matrix_nor}
Let~$|\cdot|:\R^{ n}\to\R_{\geq0 } $ be an
orthant-monotonic norm.   Then the induced matrix norm satisfies~$\|A\|\geq \max\{\abs(a_{1,1}),\dots,
\abs(a_{n,n})\}$ for any~$A\in\R^{n\times n}$.
\end{Lemma}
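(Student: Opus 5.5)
Fix an index~$j$ and let~$e_j$ denote the $j$-th standard basis vector. The plan is to test the induced norm on the single vector~$e_j$ and then use orthant-monotonicity, in the projection form~\eqref{eq:ortmon_proj}, to drop every coordinate of~$Ae_j$ except the $j$-th. The claimed bound then follows by taking the maximum over~$j$.

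\textbf{Step 1.} By the definition of the induced matrix norm,
\[
\|A\|\geq \frac{|Ae_j|}{|e_j|}.
\]
The vector~$Ae_j$ is the $j$-th column of~$A$, namely~$\begin{bmatrix} a_{1j}&\dots&a_{nj}\end{bmatrix}^\top$.

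\textbf{Step 2.} Apply~\eqref{eq:ortmon_proj} repeatedly, once for each index~$k\neq j$. Each application replaces the $k$-th coordinate by zero and does not increase the norm. After all these steps we obtain
\[
|Ae_j|\geq |a_{jj}e_j| = \abs(a_{jj})\,|e_j|,
\]
where the equality uses the homogeneity of the norm.

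\textbf{Step 3.} Combining Steps~1 and~2 gives~$\|A\|\geq \abs(a_{jj})$. Since~$j$ was arbitrary, taking the maximum over~$j$ yields the statement of the lemma.

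The only point that needs care is that the projection characterization~\eqref{eq:ortmon_proj} may be applied iteratively. This is fine: each intermediate vector is again some vector in~$\R^n$, and~\eqref{eq:ortmon_proj} holds for all~$x\in\R^n$ and all~$j$.

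One could instead argue directly from the definition of orthant-monotonicity. Take~$x:=a_{jj}e_j$ and~$y:=Ae_j$. For every~$i$ we have~$x_iy_i\geq 0$: this is~$a_{jj}^2\geq 0$ when~$i=j$ and~$0$ when~$i\neq j$. Also~$\abs(x_i)\leq \abs(y_i)$ for every~$i$. Hence~$|x|\leq|y|$, which is exactly the inequality of Step~2. I would use this one-shot version, since it avoids the iteration altogether.

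I do not expect a real obstacle here. The only subtlety worth stating is that~$|e_j|$ need not equal~$1$. The argument is unaffected, because~$|e_j|$ appears on both sides and cancels in the ratio.
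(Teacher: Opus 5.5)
Your proof is correct and matches the paper's argument: test $\|A\|$ on $e_j$, use orthant-monotonicity to drop the off-diagonal entries of the $j$-th column, and let $|e_j|$ cancel. Your one-shot check from the definition, with $x=a_{jj}e_j$ and $y=Ae_j$, is a slightly cleaner justification of the step that the paper obtains from the single-coordinate projection property~\eqref{eq:ortmon_proj}, which it applies iteratively without comment.
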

\begin{IEEEproof}
    Fix~$i\in\{1,\dots,n\}$.  Let~$e^i $ denote the $i$th unit vector in~$\R^n$.    
Then
    \begin{align*}
        \|A\||e^i| &\ge   |A e^i| \\
            &=   \left| \begin{bmatrix}
                a_{1,i} &
                \dots &
                a_{i-1,i} &
                a_{i,i} &
                a_{i+1,i} &
                \dots &
                a_{n,i}
            \end{bmatrix}^\top \right| \\
            &\ge \left| \begin{bmatrix}
                0 &
                \dots &
                0 &
                a_{i,i} &
                0 &
                \dots &
                0
            \end{bmatrix}^\top \right| \\
            &=  \abs(a_{i,i}) |e^i| ,
    \end{align*}
    where the inequality follows from~\eqref{eq:ortmon_proj}. We conclude that~$\|A\|\geq\abs(a_{i,i})$ for any~$i$. 
    \end{IEEEproof}

Let~$\mu_p$ denote  the matrix measure induced by the~$\ell_p$ norm, which is absolute and thus monotonic. Using the explicit expression for the~$\ell_1$, $\ell_2$, and~$\ell_\infty$ matrix measures (see Table~\ref{table:lp_norms}), it can be shown that if~$\mu_i(A)<0$ for~$i\in\{1,2,\infty\}$ then every diagonal entry of~$A$ is negative. The next result generalizes this to  any orthant-monotonic  norm. 

\begin{Proposition}\label{prop:orthant_stab_nec}
Let~$\mu:\R^{n\times n}\to\R $ be an
orthant-monotonic matrix measure, and let~$A\in\R^{n\times n}$. If~$\mu(A)<0$  then every diagonal entry of~$A$ is  negative.
\end{Proposition}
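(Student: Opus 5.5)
The plan is to reduce the claim to Lemma~\ref{lem:matrix_nor} through the defining limit of the matrix measure. Fix an index~$i$. For every~$\varepsilon>0$, apply Lemma~\ref{lem:matrix_nor} to the matrix~$I_n+\varepsilon A$, whose $i$th diagonal entry is~$1+\varepsilon a_{ii}$. This gives
\[
\|I_n+\varepsilon A\|\geq \abs(1+\varepsilon a_{ii})\geq 1+\varepsilon a_{ii}.
\]
Subtracting~$1$, dividing by~$\varepsilon$, and letting~$\varepsilon\to0^+$ then yields
\[
\mu(A)\geq a_{ii} \text{ for every } i,
\]
that is,~$\mu(A)\ge\max_i\{a_{ii}\}$. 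If~$\mu(A)<0$, it follows immediately that~$a_{ii}\le\mu(A)<0$ for every~$i$, which is the claim.

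As an alternative route that avoids the limit, one could use the vector~$e^i$ directly. Orthant-monotonicity gives~$|x+\varepsilon Ae^i|$ bounded below by the norm of its projection onto the~$i$th axis, and this would give the same inequality. The lemma-based argument is shorter, so I would use that.

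I do not expect a real obstacle. The only point to check is that Lemma~\ref{lem:matrix_nor} is applied to~$I_n+\varepsilon A$, whose induced norm is the one appearing in the definition of~$\mu$. The inequality~$\abs(1+\varepsilon a_{ii})\ge 1+\varepsilon a_{ii}$ is trivial, and the resulting lower bound on~$\|I_n+\varepsilon A\|$ holds for each~$\varepsilon>0$, so it survives the passage to the limit.
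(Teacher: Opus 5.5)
Your proof is correct and takes essentially the same route as the paper: both apply Lemma~\ref{lem:matrix_nor} to $I_n+hA$. The paper uses $\mu(A)<0$ to get $\|I_n+hA\|<1$ for small $h>0$, and hence $\abs(1+ha_{ii})<1$. You instead pass to the limit first and obtain the slightly more informative bound $\mu(A)\geq\max_i\{a_{ii}\}$, from which the claim follows immediately.
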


\begin{IEEEproof} 
   Since~$\mu(A)<0$, we have
    $
        \|I_n + hA\| < 1
    $
    for any  sufficiently small~$h>0$. 
Lemma~\ref{lem:matrix_nor} implies that~$\abs(1+h a_{i,i})<1$ for all~$i$,
    so~$a_{i,i}<0$ for all~$i$.
\end{IEEEproof}

Example~\ref{exa:22case} shows that a matrix~$A\in\R^{2\times 2}$ 
can be additively $D$-stable
even if one of its diagonal  entries is zero. Thus, the condition~$\mu(A)<0$, with~$\mu$ orthant-monotonic (i.e., admissible) is strictly stronger than the requirement that~$A$ is additively $D$-stable.
 
\section{Conclusion}
We proved that a matrix measure is  admissible  if and only if 
it is induced by an orthant-monotonic norm. This links the geometric notion of orthant-monotonicity with the stability-motivated notion of     admissible matrix measures.   We further showed that  
orthant-monotonic norms are the maximal class preserving contraction under arbitrary diagonal damping.

Questions for further research include the following.   
First, in proving contraction and, more generally,~$k$-contraction, it is often useful to use matrix measures induced by scaled norms (see, e.g.~\cite{sontag_cotraction_tutorial,RFM_entrain,kordercont}). If~$|\cdot|:\R^n\to\R_{\geq 0} $ is a norm and~$T\in\R^{n\times} $ is non-singular  then~$|x|_T:=|Tx|$ is a scaled norm. In our context, it is important to understand when a scaled norm is monotonic or orthant-monotonic. For some partial results along these lines, see~\cite{Lavric1997MonoNorms}.

As noted above, matrix norms induced by monotonic norms have some special properties over the set of nonnegative matrices
(see~\eqref{eq:mat_norm_mom} and~\eqref{eq:maxrn}). 
It is well-known that induced matrix norms have special minimality properties (see~\cite[Thm.~5.6.32]{matrx_ana}), and an interesting problem  
is to find necessary and sufficient conditions for a matrix norm to be minimal over the set of nonnegative matrices.

\bibliographystyle{IEEEtranS}

\begin{thebibliography}{10}
\providecommand{\url}[1]{#1}
\csname url@samestyle\endcsname
\providecommand{\newblock}{\relax}
\providecommand{\bibinfo}[2]{#2}
\providecommand{\BIBentrySTDinterwordspacing}{\spaceskip=0pt\relax}
\providecommand{\BIBentryALTinterwordstretchfactor}{4}
\providecommand{\BIBentryALTinterwordspacing}{\spaceskip=\fontdimen2\font plus
\BIBentryALTinterwordstretchfactor\fontdimen3\font minus \fontdimen4\font\relax}
\providecommand{\BIBforeignlanguage}[2]{{%
\expandafter\ifx\csname l@#1\endcsname\relax
\typeout{** WARNING: IEEEtranS.bst: No hyphenation pattern has been}%
\typeout{** loaded for the language `#1'. Using the pattern for}%
\typeout{** the default language instead.}%
\else
\language=\csname l@#1\endcsname
\fi
#2}}
\providecommand{\BIBdecl}{\relax}
\BIBdecl

\bibitem{sontag_cotraction_tutorial}
Z.~Aminzare and E.~D. Sontag, ``Contraction methods for nonlinear systems: A brief introduction and some open problems,'' in \emph{{Proc.\ 53rd IEEE Conf. on Decision and Control}}, Los Angeles, CA, 2014, pp. 3835--3847.

\bibitem{Bauer1961_mono_norms}
F.~L. Bauer, J.~Stoer, and C.~Witzgall, ``Absolute and monotonic norms,'' \emph{Numer. Math.}, vol.~3, pp. 257--264, 1961.

\bibitem{bullo_contraction}
\BIBentryALTinterwordspacing
F.~Bullo, \emph{Contraction Theory for Dynamical Systems}.\hskip 1em plus 0.5em minus 0.4em\relax Kindle Direct Publishing, 2022. [Online]. Available: \url{http://motion.me.ucsb.edu/book-ctds}
\BIBentrySTDinterwordspacing

\bibitem{Chancelier2021CapraConvex}
J.-P. Chancelier and M.~De~Lara, ``Capra-convexity, convex factorization and variational formulations for the $\ell_0$ pseudonorm,'' \emph{Set-Valued and Variational Analysis}, vol.~30, no.~2, p. 597–619, 2021.

\bibitem{cross_3_types}
G.~Cross, ``Three types of matrix stability,'' \emph{Linear Algebra Appl.}, vol.~20, pp. 253--263, 1978.

\bibitem{vid_desoer}
C.~A. Desoer and M.~Vidyasagar, \emph{Feedback Synthesis: Input-Output Properties}.\hskip 1em plus 0.5em minus 0.4em\relax Philadelphia: SIAM, 2009.

\bibitem{Funderlic1979}
R.~Funderlic, ``Some characterizations of orthant monotonic norms,'' \emph{Linear Algebra Appl.}, vol.~28, pp. 77--83, 1979.

\bibitem{GE2009736}
X.~Ge and M.~Arcak, ``A sufficient condition for additive \mbox{D-stability} and application to reaction–diffusion models,'' \emph{Systems Control Lett.}, vol.~58, no.~10, pp. 736--741, 2009.

\bibitem{Gershkov2020MonoNormsVoting}
A.~Gershkov, B.~Moldovanu, and X.~Shi, ``Monotonic norms and orthogonal issues in multidimensional voting,'' \emph{Journal of Economic Theory}, vol. 189, p. 105103, 2020.

\bibitem{Gries1967}
D.~Gries, ``Characterizations of certain classes of norms,'' \emph{Numer. Math.}, vol.~10, pp. 30--41, 1967.

\bibitem{HERSHKOWITZ1992161}
D.~Hershkowitz, ``Recent directions in matrix stability,'' \emph{Linear Algebra Appl.}, vol. 171, pp. 161--186, 1992.

\bibitem{matrx_ana}
R.~A. Horn and C.~R. Johnson, \emph{Matrix Analysis}, 2nd~ed.\hskip 1em plus 0.5em minus 0.4em\relax Cambridge University Press, 2013.

\bibitem{Unifying_Matrix_Stability_2019}
O.~Y. Kushel, ``Unifying matrix stability concepts with a view to applications,'' \emph{SIAM Review}, vol.~61, no.~4, pp. 643--729, 2019.

\bibitem{Lavric1997MonoNorms}
B.~Lavri{\v c}, ``Monotonicity properties of certain classes of norms,'' \emph{Linear Algebra Appl.}, vol. 259, pp. 237--250, 1997.

\bibitem{Lavric1999orthantmono}
------, ``A note on *orthant-monotonic norms,'' \emph{Linear Algebra Appl.}, vol. 299, no.~1, pp. 195--200, 1999.

\bibitem{LOGOFET200575}
D.~O. Logofet, ``Stronger-than-{Lyapunov} notions of matrix stability, or how ``flowers''' help solve problems in mathematical ecology,'' \emph{Linear Algebra Appl.}, vol. 398, pp. 75--100, 2005.

\bibitem{LOHMILLER1998683}
W.~Lohmiller and J.-J.~E. Slotine, ``On contraction analysis for non-linear systems,'' \emph{Automatica}, vol.~34, pp. 683--696, 1998.

\bibitem{diag_transform_1988}
G.~Loizou, ``Diagonal transformations,'' \emph{Applicationes Mathematicae}, vol.~20, no.~2, pp. 245--254, 1988.

\bibitem{Manchester2015ControlCM}
I.~R. Manchester and J.-J.~E. Slotine, ``Control contraction metrics: Convex and intrinsic criteria for nonlinear feedback design,'' \emph{IEEE Trans.\ Automat.\ Control}, vol.~62, no.~6, pp. 3046--3053, 2015.

\bibitem{RFM_entrain}
M.~Margaliot, E.~D. Sontag, and T.~Tuller, ``Entrainment to periodic initiation and transition rates in a computational model for gene translation,'' \emph{PLoS ONE}, vol.~9, no.~5, p. e96039, 2014.

\bibitem{Ofir2022minimum}
R.~Ofir, F.~Bullo, and M.~Margaliot, ``Minimum effort decentralized control design for contracting network systems,'' \emph{IEEE Control Systems Letters}, vol.~6, pp. 2731--2736, 2022.

\bibitem{network_contractive}
G.~Russo, M.~di~Bernardo, and E.~Sontag, ``A contraction approach to the hierarchical analysis and design of networked systems,'' \emph{IEEE Trans.\ Automat.\ Control}, vol.~58, no.~5, pp. 1328--1331, 2013.

\bibitem{sontag2010contractive}
E.~D. Sontag, ``Contractive systems with inputs,'' in \emph{Perspectives in Mathematical System Theory, Control, and Signal Processing: A Festschrift in Honor of Yutaka Yamamoto on the Occasion of his 60th Birthday}, J.~C. Willems, S.~Hara, Y.~Ohta, and H.~Fujioka, Eds.\hskip 1em plus 0.5em minus 0.4em\relax Berlin, Heidelberg: Springer, 2010, pp. 217--228.

\bibitem{sontag_systems_bio_book}
\BIBentryALTinterwordspacing
------, \emph{Lecture Notes on Mathematical Systems Biology}, 2026. [Online]. Available: \url{https://sontaglab.org/}
\BIBentrySTDinterwordspacing

\bibitem{strom1975logarithmic}
T.~Str{\"o}m, ``On logarithmic norms,'' \emph{SIAM J. Numerical Analysis}, vol.~12, no.~5, pp. 741--753, 1975.

\bibitem{Turing_diff_1952}
A.~M. Turing, ``The chemical basis of morphogenesis,'' \emph{Philosophical Transactions of the Royal Society of London Series B, Biological Sciences}, vol. 237, no. 641, p. 37–72, 1952.

\bibitem{Wang2001DiffusionDrivenInstability}
L.~Wang and M.~Y. Li, ``Diffusion-driven instability in reaction–diffusion systems,'' \emph{Journal of Mathematical Analysis and Applications}, vol. 254, no.~1, pp. 138--153, 2001.

\bibitem{kordercont}
C.~Wu, I.~Kanevskiy, and M.~Margaliot, ``$k$-contraction: theory and applications,'' \emph{Automatica}, vol. 136, p. 110048, 2022.

\bibitem{orthant_mono_and_linear_programs}
K.~Zietak, ``Orthant–monotonic norms and overdetermined linear systems,'' \emph{J. Approximation Theory}, vol.~88, no.~2, pp. 209--227, 1997.

\end{thebibliography}


\end{document}